\numberwithin{equation}{section}
\numberwithin{figure}{section}
\theoremstyle{plain}
\newtheorem{thm}{Theorem}
  \theoremstyle{remark}
  \newtheorem{rem}[thm]{Remark}
  \newtheorem{alg}{Algorithm}
\newtheorem*{rem*}{Remark}
  \theoremstyle{plain}
  \newtheorem{lem}[thm]{Lemma}
\begin{document}

\title{Computing a logarithm of a unitary matrix with general spectrum}

\author{Terry A. Loring}

\address{Department of Mathematics and Statistics, University of New Mexico,
Albuquerque, NM 87131, USA.}

\begin{abstract}
We analyze an algorithm for computing a skew-Hermitian logarithm
of a unitary matrix. This algorithm is very easy
to implement using standard software and it works well even for unitary
matrices with no spectral conditions assumed. Certain examples, with
many eigenvalues near $-1$, lead to very non-Hermitian
output for other basic methods of calculating matrix logarithms.
Altering the output of these algorithms to force an Hermitian
output creates accuracy issues which are avoided in the
considered algorithm.

A modification is introduced to deal properly with the
$J$-skew symmetric unitary
matrices.  Applications to numerical studies of
topological insulators in two symmetry classes are discussed.
\end{abstract}

\keywords{MATLAB, LAPACK, matrix logarithm, matrix exponential, 
Schur decomposition,
functional calculus, normal matrices, unitary matrices, self-dual
matrices, Floquet Hamiltonian.}

\subjclass[2000]{47B15,65F60}

\maketitle

\section{Introduction}

While all invertible matrices have a logarithm, indeed many logarithms,
the unitary matrices have the nicest logarithms. Every unitary matrix
$U$ has a skew-Hermitian logarithm. Indeed, when $U$ is unitary,
the conditions
\[
-\pi<K\leq\pi,\quad e^{iK}=U
\]
specify $K$ uniquely. Still working abstractly, we may take advantage
of the finiteness of the spectrum of $U$ when discussing additional
symmetries. For each $U$
there is a complex polynomial so that $K=p(U)$ which means, for example,
that when $U$ is $J$-skew-symmetric then so must be $K$.

We may regard $U^{*}=U^{-1}$ as a symmetry, but the non-linearity
of the unit circle inevitably makes this symmetry a little different
from that of being Hermitian ($A=A^{*}$) or symmetric ($A^{\top}=A$).
In the case of a matrix being Hermitian, if numerical errors start
to creep in, we simple replace $A$ by $\tfrac{1}{2}A+\tfrac{1}{2}A^{*}$
in ${\mathcal O}(n^{2})$ time and have again $A^{*}=A$ exactly.
The unitary part of the polar decomposition can be used when $U^{*}U$
is only close to $I$ but it takes more that ${\mathcal O}(n^{2})$ time
to compute and the result is not exactly unitary. For this reason we need to
consider errors in $\left\Vert U^{*}U-I\right\Vert $ a bit larger
than machine precision.

\begin{rem*}
We are using $U^{*}$ to denote conjugate-transpose of a matrix. This
would be denoted with a dagger in physics.  For just the conjugate
we use $\overline{U}$.

What is called {\em self-dual}
\cite{hastings2001eigenvalue} in physics is called {\em $J$-skew-symmetric}
\cite{bunseStructuredWigenvalue} or {\em skew Hamiltonian}
\cite{benner-skew} in computer science.

Finally, what
applied mathematicians call matrix functions are called, in pure math,
applications of the functional calculus.
\end{rem*}

The algorithm discussed here arose in a numerical study in condensed
matter physics \cite{HastLorTheoryPractice}. In that situation
$\left\Vert U^{*}U-I\right\Vert \approx\frac{1}{2}$
was typical. Such a matrix $U$ is still very well conditioned, so
one would expect almost any algorithm for computing a logarithm to
perform well, at least most of the time. However the study in question
had a time-reversal symmetry that, through a variation on Kramers
pairs, caused the approximate unitaries in question to have multiplicity
at least two in every eigenvalue. Otherwise the approximate unitaries
were free to have arbitrary spectrum within the unit circle. Some reflection
makes one realize that whatever branch of logarithm one uses, a degenerate
eigenvalue on the branch point has the potential to cause trouble.
This is true whether or not the degeneracy in the spectrum is caused
by a symmetry or by very bad luck.

We are not claiming it is hard to find $K$ with $e^{iK} \approx U$.
We are interested in achieving $e^{iK} \approx U$ and $K^*=K$ at
the same time.  Moreover we want an algorithm that can be easily
analyzed and also easily modified to take into account additional
symmetries.

To be consistent, we consider the branch of logarithm that has the
imaginary part of $\log(\lambda)$ in the interval $(-\pi,\pi]$, so in
particular $\log(-1)=i\pi$.
We are mainly concerned with the operator norm
\[
\left\Vert A\right\Vert 
=\sup_{\mathbf{v}\neq0}
\frac{\left\Vert A\mathbf{v}\right\Vert _{2}}
{\left\Vert \mathbf{v}\right\Vert _{2}}.
\]
We also
will use the Frobenius norm, i.e.~the un-normalized
Hilbert\textendash{}Schmidt norm
\[
\left\Vert A\right\Vert _{\mathrm{F}}
=\sqrt{\sum_{j,k=1}^{n}\left|A_{jk}\right|^{2}}.
\]
Recall the bounds 
$\left\Vert A\right\Vert 
\leq
\left\Vert A\right\Vert _{\mathrm{F}}
\leq
\sqrt{n}\left\Vert A\right\Vert $.

One easy solution, that often works well for logarithms of well-conditioned
matrices, starts by diagonalizing $U$ via an invertible.

\begin{alg} \ \label{alg:diagonalize}
\begin{enumerate}
\item Compute a diagonalization, $W$ invertible and $T$ diagonal with
$WTW^{-1} \approx U$.
\item Create a diagonal unitary matrix $D$ via $ $$D_{jj}=T_{jj}/\left|T_{jj}\right|$.
\item Compute $H_0 = W \log(D) W^{-1}$.
\item Output: $H = \frac{1}{2}H_0^* + \frac{1}{2}H_0$.
\end{enumerate}
\end{alg}

This fails badly in special cases, even for small matrices. Given
\[
U=\left[\begin{array}{cc}
-1 & 0\\
0 & -1
\end{array}\right]
\]
we might consider the approximate diagonalization
\[
W=\left[\begin{array}{cc}
1 & \alpha\\
0 & 1
\end{array}\right],\quad D=\left[\begin{array}{cc}
-1 & 0\\
0 & \exp(-\pi i+\theta i)
\end{array}\right]
\]
 to be rather good if $\theta>0$ is small, even if $\alpha>0$ is
rather large. Indeed
\begin{align*}
WDW^{-1}-U & =\left[\begin{array}{cc}
-1 & \alpha(1+\exp(-\pi i+\theta i))\\
0 & \exp(-\pi i+\theta i)
\end{array}\right]-U\\
 & =(1+\exp(-\pi i+\theta i))\left[\begin{array}{cc}
0 & \alpha\\
0 & 1
\end{array}\right]
\end{align*}
which has norm 
\[
\left|1+\exp(-\pi i+\theta i)\right|\sqrt{1+\alpha^{2}}.
\]
However, if we set
\begin{align*}
K_{0} & =-iW\log(D) W^{-1}\\
 & =\left[\begin{array}{cc}
1 & \alpha\\
0 & 1
\end{array}\right]\left[\begin{array}{cc}
\pi & 0\\
0 & -\pi+\theta
\end{array}\right]\left[\begin{array}{cc}
1 & -\alpha\\
0 & 1
\end{array}\right]\\
 & =\left[\begin{array}{cc}
\pi & -\pi\alpha+\theta\alpha\\
0 & -\pi+\theta
\end{array}\right]\left[\begin{array}{cc}
1 & -\alpha\\
0 & 1
\end{array}\right]\\
 & =\left[\begin{array}{cc}
\pi & -2\pi\alpha+\theta\alpha\\
0 & -\pi+\theta
\end{array}\right]
\end{align*}
we note that this is not close to the actual logarithm, which is diagonal.  It
is not close to any branch of logarithm applied to $U$.
When we enforce the symmetry by $K=\tfrac{1}{2}K_{0}^{*}+\tfrac{1}{2}K_{0}$,
we obtain
\[
K=\left[\begin{array}{cc}
\pi & -\pi\alpha+\frac{1}{2}\theta\alpha\\
-\pi\alpha+\frac{1}{2}\theta\alpha & -\pi+\theta
\end{array}\right].
\]
 When $\alpha=1$ we have
\[
\lim_{\theta\rightarrow0}\left\Vert WDW^{-1}-U\right\Vert =0
\]
and yet
\[
\left[\begin{array}{cc}
\pi & -\pi\alpha+\frac{1}{2}\theta\\
-\pi\alpha+\frac{1}{2}\theta & -\pi+\theta
\end{array}\right]\rightarrow K_{1}
\]
where
\[
K_{1}=\left[\begin{array}{cc}
\pi & -\pi\\
-\pi & -\pi
\end{array}\right].
\]
Working numerically, we find
\[
\lim_{\theta\rightarrow0}\left\Vert e^{iK}-U\right\Vert 
=
\left\Vert e^{iK_{1}}-U\right\Vert 
\approx1.2114
\]
which is dramatically off.

Examining the methods for computing matrix exponentials, both good and bad,
in \cite{moler1978nineteen}, we notice methods based on the Schur decomposition.
In theory, the Schur decomposition of a normal matrix will result in a
an upper triangular factor that is actually diagonal.  So almost normal should
lead to an almost diagonal factor, but this is too naive an approach.  However
we are not concerned with general almost normal matrices just now, only the 
nice special case of almost unitary matrices.  A naive approach using the
Schur decomposition will work well in this specialized situation.

The \texttt{logm} function in MATLAB is optimized to so that 
$H=\mathrm{logm}(U)$
leads to $\mathrm{expm}(H)$ being very close to $U$, even when $U$
is badly conditioned. It is not
optimized to convert the approximate relation $U^{*}U\approx I$ into
the approximate relation $H^{*}\approx-H$. The Schur-Parlett algorithm
\cite{DavHighamMatrixFunctions2003} behind \texttt{logm} and 
\texttt{funm} is primarily intended for entire functions. It can behaive
unexpectedly when eigenvalues are at and near branch points.

\begin{alg} \ \label{alg:logm}
\begin{enumerate}
\item Compute logarithm $H_0$ of $U$ via  \texttt{logm}.
\item Output: $H = \frac{1}{2}H_0^* + \frac{1}{2}H_0$.
\end{enumerate}
\end{alg}

A far more ambitious project would be to find algorithms for
matrix functions that, given almost normal matrices as input,
lead to almost normal matrices on output.  The subject of almost
normal matrices is discussed in a recent
survey \cite{davidson2010essentially} and is full of
subtle problems.

\section{Schur factorization of near unitary matrices}

We define the \emph{deviation from unitary} to be the number
$\left\Vert U^{*}U-I\right\Vert $.

In finite precision arithmetic, we can expect the deviation 
from unitary to almost never equal zero.  Not surprisingly
this error is easy to handle.

\begin{lem}
Suppose $U$ is in $\mathbf{M}_{n}(\mathbb{C})$ and 
$U\mathbf{v}=\lambda\mathbf{v}$
for some unit vector $\mathbf{v}$ and scalar $\lambda$. Then
\[
\left|\left|\lambda\right|^{2}-1\right|
\leq
\left\Vert U^{*}U-I\right\Vert .
\]
\end{lem}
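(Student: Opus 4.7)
The plan is to test the operator $U^{*}U - I$ against the eigenvector $\mathbf{v}$ and read off the bound from a single inner product.

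First I would compute $\langle (U^{*}U - I)\mathbf{v}, \mathbf{v}\rangle$ directly. Using the eigenvalue equation $U\mathbf{v} = \lambda\mathbf{v}$, the first term expands as $\langle U^{*}U\mathbf{v}, \mathbf{v}\rangle = \langle U\mathbf{v}, U\mathbf{v}\rangle = |\lambda|^{2}\|\mathbf{v}\|^{2} = |\lambda|^{2}$, while $\langle \mathbf{v}, \mathbf{v}\rangle = 1$ since $\mathbf{v}$ is a unit vector. So the inner product equals $|\lambda|^{2} - 1$ exactly.

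Next I would apply the standard operator-norm bound on quadratic forms: for any unit vector $\mathbf{v}$ and any matrix $A$, Cauchy--Schwarz gives $|\langle A\mathbf{v}, \mathbf{v}\rangle| \leq \|A\mathbf{v}\| \leq \|A\|$. Applied to $A = U^{*}U - I$, this yields $||\lambda|^{2} - 1| \leq \|U^{*}U - I\|$, which is the desired inequality.

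There is essentially no obstacle here; the only thing to be careful about is making sure the inner-product convention is consistent (so that $\langle U\mathbf{v},U\mathbf{v}\rangle = \langle U^{*}U\mathbf{v},\mathbf{v}\rangle$ with whichever slot of $\langle\cdot,\cdot\rangle$ is conjugate-linear). The argument is two lines and uses nothing beyond the definition of the adjoint and the definition of the operator norm.
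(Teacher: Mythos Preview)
Your proof is correct and is essentially the same as the paper's: the paper packages the computation by writing $P(U^{*}U-I)P=(|\lambda|^{2}-1)P$ for the rank-one projection $P=\mathbf{v}\mathbf{v}^{*}$, whose norm is exactly the quadratic form $|\langle(U^{*}U-I)\mathbf{v},\mathbf{v}\rangle|$ you compute directly. Both then bound this by $\|U^{*}U-I\|$ using $\|PAP\|\le\|A\|$, which is the same Cauchy--Schwarz step you invoke.
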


\begin{proof}
Let $P$ denote the orthogonal projection on the one-dimensional
space spanned by $\lambda$. Then
\begin{align*}
\left|\left|\lambda\right|^{2}-1\right| 
& =\left\Vert \left(\left|\lambda\right|^{2}-1\right)P\right\Vert \\
& =\left\Vert P\left(U^{*}U-I\right)P\right\Vert \\
& \leq\left\Vert U^{*}U-I\right\Vert .
\end{align*}
\end{proof}

\begin{rem}
Notice then
\[
\sqrt{1-\left\Vert U^{*}U-I\right\Vert }
\leq
\left|\lambda\right|
\leq
\sqrt{1+\left\Vert U^{*}U-I\right\Vert }
\]
so that as long as $\left\Vert U^{*}U-I\right\Vert <1$ we have 
\[
\left|\left|\lambda\right|-1\right|
\leq
\left\Vert U^{*}U-I\right\Vert .
\]
\end{rem}

\begin{lem}
\label{lem:TriangFrobeneous} Suppose $T$ in $\mathbf{M}_{n}(\mathbb{C})$
is upper triangular and let $D$ denote the diagonal matrix corresponding
to the diagonal of $T$. Then
\[
\left\Vert T-D\right\Vert _{\mathrm{F}}
\leq
\sqrt{2(n-1)}\left\Vert T^{*}T-I\right\Vert ^{\frac{1}{2}}.
\]
\end{lem}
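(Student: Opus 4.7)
The plan is to extract information about the strictly upper triangular entries of $T$ from the diagonal of $T^*T$, which is where those entries show up as sums of squares. Write $T = D + N$ with $N$ strictly upper triangular. A direct expansion of the $(j,j)$ entry of $T^*T$ gives
\[
(T^{*}T)_{jj} = \sum_{k=1}^{j} |T_{kj}|^{2} = |T_{jj}|^{2} + \sum_{k<j}|T_{kj}|^{2},
\]
so I can solve for the column sum of squared off-diagonal entries:
\[
\sum_{k<j}|T_{kj}|^{2} = \bigl((T^{*}T)_{jj}-1\bigr) + \bigl(1-|T_{jj}|^{2}\bigr).
\]

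Next I would bound each of these two terms by $\|T^{*}T-I\|$. For the first term, the diagonal entry $(T^{*}T - I)_{jj}$ equals $\mathbf{e}_j^{*}(T^{*}T - I)\mathbf{e}_j$, which in absolute value is at most $\|T^{*}T - I\|$. For the second term, I would invoke the previous lemma: since $T$ is upper triangular, each diagonal entry $T_{jj}$ is an eigenvalue of $T$, with an explicit unit eigenvector obtained by back-substitution in the space spanned by $\mathbf{e}_1,\ldots,\mathbf{e}_j$. The lemma then gives $\bigl| |T_{jj}|^{2}-1\bigr| \leq \|T^{*}T-I\|$. Combining the two bounds yields
\[
\sum_{k<j}|T_{kj}|^{2} \leq 2\|T^{*}T-I\|
\]
for each $j$.

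Finally, I would sum over $j$. Since for $j=1$ the set $\{k<j\}$ is empty, there are only $n-1$ nontrivial terms, producing
\[
\|T-D\|_{\mathrm{F}}^{2} = \sum_{j=2}^{n}\sum_{k<j}|T_{kj}|^{2} \leq 2(n-1)\|T^{*}T-I\|,
\]
and taking square roots gives the stated inequality.

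There is no real obstacle; the only subtlety is recognizing that both $(T^{*}T)_{jj}$ and $|T_{jj}|^{2}$ must be independently controlled (neither is automatically equal to $1$), and that the previous lemma is precisely what supplies the second of these bounds. The factor $\sqrt{2(n-1)}$ rather than $\sqrt{2n}$ comes for free from the observation that the first column of the strictly upper triangular part is empty.
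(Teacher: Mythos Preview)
Your proof is correct and follows essentially the same approach as the paper's: compute the $(j,j)$ entry of $T^{*}T$, bound it via the operator norm of $T^{*}T-I$, invoke the preceding lemma for $\bigl||T_{jj}|^{2}-1\bigr|$, and sum the resulting column estimates over $j=2,\dots,n$. The only cosmetic difference is that you split $\sum_{k<j}|T_{kj}|^{2}$ explicitly into two terms before bounding, whereas the paper chains the same two inequalities in a single line.
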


\begin{proof}
We know that the $T_{jj}$ are all eigenvalues of $T$ so
\[
\left|\left|T_{jj}\right|^{2}-1\right|
\leq
\left\Vert T^{*}T-I\right\Vert .
\]
Every element on the diagonal of a matrix has absolute value at most
the norm of that matrix. Applied to $T^{*}T-I$ this tells us
\[
\left\Vert T^{*}T-I\right\Vert 
\geq
\left|-1+\sum_{k=1}^{j}\left|T_{kj}\right|^{2}\right|
\geq
\sum_{k=1}^{j-1}\left|T_{kj}\right|^{2}-\left|\left|T_{jj}\right|^{2}-1\right|
\]
so
\[
\sum_{k=1}^{j-1}\left|T_{kj}\right|^{2}\leq2\left\Vert T^{*}T-I\right\Vert .
\]
Summing these error bounds we learn
\[
\left(\left\Vert T-D\right\Vert _{\mathrm{F}}\right)^{2}
\leq
2(n-1)\left\Vert T^{*}T-I\right\Vert .
\]
\end{proof}

\begin{rem} \label{estimateInverseEigen}
We get here the estimate
\[
\left\Vert T-D\right\Vert _{\mathrm{F}}
\leq
\sqrt{2(n-1)}\left\Vert T^{*}T-I\right\Vert _{\mathrm{F}}^{\frac{1}{2}}
\]
which we can compare to Henrici's estimate \cite{HenriciNonNormal1962}
\[
\left\Vert T-D\right\Vert _{\mathrm{F}}
\leq
\left(\frac{n^{3}-n}{12}\right)^{\frac{1}{2}}
\left\Vert T^{*}T-TT^{*}\right\Vert _{\mathrm{F}}^{\frac{1}{2}}.
\]
As expected, we are getting a better estimate for almost unitary matrices
than works in the more general case of almost normal matrices. 
\end{rem}

\begin{lem}
Suppose $T$ in $\mathbf{M}_{n}(\mathbb{C})$ is upper triangular
with $\left\Vert T^{*}T-I\right\Vert <1$. If we let $D$ denote the
diagonal matrix with
\[
D_{jj}=\frac{1}{\left|T_{jj}\right|}T_{jj}
\]
then
\[
\left\Vert T-D\right\Vert 
\leq
\left(\sqrt{2(n-1)}+1\right)\left\Vert T^{*}T-I\right\Vert ^{\frac{1}{2}}.
\]
\end{lem}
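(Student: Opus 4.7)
The plan is to split the error $T - D$ into two pieces: the off-diagonal part $T - D_T$ (where $D_T$ denotes the diagonal part of $T$, with entries $T_{jj}$) and the diagonal piece $D_T - D$, and then bound each piece using results already established.

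First I would apply Lemma \ref{lem:TriangFrobeneous}, together with the bound $\|A\| \leq \|A\|_{\mathrm{F}}$, to control the off-diagonal part:
\[
\|T - D_T\| \;\leq\; \|T - D_T\|_{\mathrm{F}} \;\leq\; \sqrt{2(n-1)}\,\|T^*T - I\|^{1/2}.
\]

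Next I would handle the diagonal correction. Since $D_T$ and $D$ are both diagonal, the operator norm of their difference is the largest diagonal entry in absolute value, and for each $j$ we compute
\[
\Bigl| T_{jj} - \tfrac{T_{jj}}{|T_{jj}|} \Bigr| = \bigl| \, |T_{jj}| - 1 \, \bigr|.
\]
By the remark following the first lemma, the hypothesis $\|T^*T - I\| < 1$ gives $\bigl| |T_{jj}| - 1 \bigr| \leq \|T^*T - I\|$, so $\|D_T - D\| \leq \|T^*T - I\|$. Since $\|T^*T - I\| < 1$, we further have $\|T^*T - I\| \leq \|T^*T - I\|^{1/2}$, yielding $\|D_T - D\| \leq \|T^*T - I\|^{1/2}$.

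Finally, the triangle inequality $\|T - D\| \leq \|T - D_T\| + \|D_T - D\|$ combines the two estimates into the claimed bound. There is no real obstacle here; the only subtle point is remembering that $D$ is not quite the diagonal of $T$ but its radial projection onto the unit circle, so the proof must absorb the extra radial correction $|T_{jj}| - 1$, and this is precisely where the ``$+1$'' in the constant $\sqrt{2(n-1)} + 1$ comes from.
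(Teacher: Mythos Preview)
Your proof is correct and follows essentially the same route as the paper: split $T-D$ into the off-diagonal part $T-D_T$ (the paper calls $D_T$ by $E$) and the diagonal correction $D_T-D$, bound the first via Lemma~\ref{lem:TriangFrobeneous} and the second via the remark on eigenvalue moduli, then combine using $\|T^*T-I\|\le\|T^*T-I\|^{1/2}$.
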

\begin{proof}
The previous lemma shows that with $E$ diagonal and $E_{jj}=T_{jj}$
we have
\[
\left\Vert T-E\right\Vert \leq\left\Vert T-E\right\Vert _{\mathrm{F}}
\leq
\sqrt{2(n-1)}\left\Vert T^{*}T-I\right\Vert ^{\frac{1}{2}}.
\]
Dealing with diagonal matrices is easy, and we find
\[
\| D - E \| 
= \sup_j \left| T_{jj} - \frac{1}{\left|T_{jj}\right|}T_{jj} \right|
\leq  \| T^*T - I \|
\]
by Remark~\ref{estimateInverseEigen}
so
\begin{align*}
\left\Vert T-D\right\Vert  
 & \leq\sqrt{2(n-1)}\left\Vert T^{*}T-I\right\Vert ^{\frac{1}{2}}+\left\Vert T^{*}T-I\right\Vert \\
 & \leq\left(\sqrt{2(n-1)}+1\right)\left\Vert T^{*}T-I\right\Vert ^{\frac{1}{2}}.
\end{align*}
\end{proof}

\begin{thm}
\label{thm:SchurNearNormal} 
Suppose $U$ is in $\mathbf{M}_{n}(\mathbb{C})$
and that unitary $Q$ and upper-triangular matrix $T$ are a Schur
factorization for $U$, meaning $U=QTQ^{*}$. If we define $D$ to
be the diagonal unitary matrix with 
\[
D_{jj}=\frac{1}{\left|T_{jj}\right|}T_{jj}
\]
then
\[
\left\Vert U-QDQ^{*}\right\Vert 
\leq
\left(\sqrt{2(n-1)}+1\right)\left\Vert U^{*}U-I\right\Vert ^{\frac{1}{2}}.
\]
\end{thm}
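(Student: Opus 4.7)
The plan is to reduce the theorem directly to the preceding lemma via the unitary invariance of the operator norm. Since $Q$ is unitary,
\[
\|U - QDQ^*\| = \|Q(T-D)Q^*\| = \|T-D\|,
\]
and the identity $U^*U - I = Q(T^*T - I)Q^*$ yields $\|U^*U - I\| = \|T^*T - I\|$. Under these two substitutions, the inequality to prove becomes exactly the conclusion of the previous lemma applied to the upper-triangular matrix $T$ and its diagonal unitarization $D$.

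So the bulk of the proof will be two lines: first replace $U$ and $QDQ^*$ by $T$ and $D$ using unitary conjugation, and then invoke the previous lemma. The only step requiring any care is the hypothesis $\|T^*T - I\| < 1$ of that lemma. When $\|U^*U - I\| < 1$ this is automatic, and we are done. In the complementary regime $\|U^*U - I\| \geq 1$, the target right-hand side is already at least $\sqrt{2(n-1)} + 1$, and I would dispatch this case by the crude estimate $\|T - D\| \leq \|T\| + 1 \leq \sqrt{1 + \|T^*T - I\|} + 1$, together with $\|T^*T - I\|^{1/2} \geq 1$, which easily dominates the left-hand side.

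I do not expect a genuine obstacle here. The whole argument is a one-step unitary-conjugation reduction to an already-proved lemma, and the large-deviation regime is handled by a trivial triangle inequality. The one piece of bookkeeping worth mentioning is that $D$ is only defined when every $T_{jj}$ is nonzero, but the earlier lemma on eigenvalues of $U$ shows that $|T_{jj}|^2 \geq 1 - \|T^*T - I\|$, so this is automatic in the interesting regime $\|U^*U - I\| < 1$ and can be finessed in the complementary regime.
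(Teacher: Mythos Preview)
Your approach is essentially identical to the paper's: both reduce to the preceding lemma via the unitary invariance identities $\|U-QDQ^{*}\|=\|T-D\|$ and $\|U^{*}U-I\|=\|T^{*}T-I\|$. If anything you are more careful than the paper, which simply says ``this follows immediately'' without addressing the lemma's hypothesis $\|T^{*}T-I\|<1$; your triangle-inequality disposal of the regime $\|U^{*}U-I\|\geq 1$ (and the well-definedness of $D$) fills that small gap.
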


\begin{proof}
We have
\[
\left\Vert T^{*}T-I\right\Vert
=
\left\Vert Q^{*}\left(U^{*}U-I\right)Q\right\Vert
=
\left\Vert U^{*}U-I\right\Vert 
\]
and
\[
\left\Vert T-D\right\Vert 
=
\left\Vert Q^{*}\left(U-QDQ^{*}\right)Q\right\Vert 
=
\left\Vert U-QDQ^{*}\right\Vert 
\]
so this follows immediately.
\end{proof}

\section{Diagonalizing matrices that are close to unitary}

We may face a matrix $U$ where $U^{*}U$ is as close to $I$ as can
be expected within the constraints of machine precision. For Hermitian
matrices, the eigensolvers in LAPACK  produce eigenvectors that
are ``always nearly orthogonal to working precision'' 
\cite[\S 4.7.1]{anderson1999lapack}.  No comparable promise is made 
in other eigensolvers, but off-the-shelf
algorithm such as ZGEES in LAPACK computing a Schur factorization form
a good substitute.  We get a simple algorithm for finding a unitary
eigensolver for a unitary matrix.

\begin{alg} \ \label{alg:Schur}
\begin{enumerate}
\item Compute a Schur factorization, $Q$ unitary and $T$ upper-triangular,
$QTQ^{*} \approx U$.
\item Create a unitary diagonal matrix $D$ via 
$D_{jj}=T_{jj}/\left|T_{jj}\right|$.
\item Compute $Q \log(D) Q^*$.
\end{enumerate}
\end{alg}

If $\left\Vert U^{*}U-I\right\Vert $ is a larger than machine
precision, then we can compute the unitary part of $U$ and
proceed as above.  For simplicity of programming, we will compute
the polar decomposition via the singular value decomposition.  
We are not advocating this method, heeding the warnings in
\cite{higham1994parallel}.

\begin{alg} \ \label{alg:polarSchur}
\begin{enumerate}
\item Set $V = U\left(U^*U\right)^{-\frac{1}{2}}$.
\item Compute a Schur factorization, $Q$ unitary and $T$ upper-triangular,
$QTQ^{*} \approx V$.
\item Create a unitary diagonal matrix $D$ via $ $$D_{jj}=T_{jj}/\left|T_{jj}\right|$.
\item Compute $Q \log(D) C^*$;
\end{enumerate}
\end{alg}

Our focus is on $U$ with deviation from unitary in the range
$(0,0.1)$.  
Newton's method of approximating the unitary part of $U$ is
very effective in this situation.  Newton's method here sets
$V=U$ and iterates the replacement
$V\leftarrow\tfrac{1}{2}\left(V+\left(V^{-1}\right)^{*}\right)$.
For our purposes, it makes sense to use two interations.

\begin{alg} \ \label{alg:NewtonSchur}
\begin{enumerate}
\item Set $V_1 = \tfrac{1}{2}\left(U+\left(U^{-1}\right)^{*}\right)$.
\item Set $V = \tfrac{1}{2}\left(V_1+\left(V_1^{-1}\right)^{*}\right)$.
\item Compute a Schur factorization, $Q$ unitary and $T$ upper-triangular,
$QTQ^{*} \approx V$.
\item Create a unitary diagonal matrix $D$ via $ $$D_{jj}=T_{jj}/\left|T_{jj}\right|$.
\item Compute $Q \log(D) Q^*$.
\end{enumerate}
\end{alg}

One could compute when to stop the iterations for best
accuracy, following,
\cite{higham1994parallel}.  As simpler methods work here, where
we have such well-conditioned matrices, we do not pursue
this option.

\begin{table}

\begin{tabular}{|c|>{\centering}p{1in}|>{\centering}p{1in}|c|c|c|c|}
\hline 
 & deviation from unitary & \multicolumn{5}{c|}{Backwards Error $\left\Vert e^{-iH}-U\right\Vert _{\strut}^{\strut}$}\tabularnewline
\hline 
$n$ & $\left\Vert U^{*}U-I\right\Vert _{\strut}^{\strut}$ & Algorithm 1 & Algorithm 2 & Algorithm 3 & Algorithm 4 & Algorithm 5\tabularnewline
\hline 
\hline 
8 & 4.11082e-15 & 0.12285 & 0.12340 & 4.30902e-15 & 4.41189e-15 & 4.13976e-15\tabularnewline
\hline 
16 & 5.02961e-15 & 0.04407 & 0.04465 & 6.31606e-15 & 6.47906e-15 & 6.13171e-15\tabularnewline
\hline 
32 & 6.33082e-15 & 0.09286 & 0.09099 & 9.36391e-15 & 9.30067e-15 & 8.99073e-15\tabularnewline
\hline 
64 & 1.10432e-14 & 0.01952 & 0.01641 & 1.38378e-14 & 1.39124e-14 & 1.32675e-14\tabularnewline
\hline 
128 & 1.34734e-14 & 0.01999 & 0.02239 & 2.29980e-14 & 2.32533e-14 & 2.26790e-14\tabularnewline
\hline 
256 & 3.19324e-14 & 0.06131 & 0.06158 & 4.49885e-14 & 4.31729e-14 & 4.42639e-14\tabularnewline
\hline 
\end{tabular}

\caption{We compare algorithms that produce matrices $H$ with $H^{*}=H$ exactly
but with $e^{iH}\approx U$ with varying accuracy. These are run on
unitary matrices with at least two eigenvalues near $-1$. For each
$n$ the data shown are averaged over 30 test matrices.
The noise variable in the code
in the Appendix was set to $10^{-15}n^{-0.56}$.
The exponent $-0.56$ was determined to keep the deviation from 
unitary close to constant across different matrix sizes.
\label{tab:smallNoiseErr}
}

\end{table}

Even one iteration of Newton's method has advantages.

\begin{lem}
Suppose $U$ is in $\mathbf{M}_{n}(\mathbb{C})$ and 
$\left\Vert U^{*}U-I\right\Vert \leq\frac{3}{4}$.
If 
\[
V=\frac{1}{2}\left(U+\left(U^{-1}\right)^{*}\right)
\]
then
\[
\left\Vert V^{*}V-I\right\Vert \leq\left\Vert U^{*}U-I\right\Vert ^{2}
\]
and
\[
\left\Vert U-V\right\Vert \leq\left\Vert U^{*}U-I\right\Vert .
\]
\end{lem}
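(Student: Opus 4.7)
The plan is to reduce everything to functional calculus on the positive Hermitian matrix $H=U^{*}U$. Since $\|H-I\|\le 3/4$, the spectrum of $H$ lies in $[1/4,7/4]$, so $H$ (and hence $U$) is invertible and $\|H^{-1}\|\le 4$. The critical algebraic observation is that $(U^{-1})^{*}=U(U^{*}U)^{-1}=UH^{-1}$, which yields the clean factorization
\[
V=\tfrac{1}{2}U\bigl(I+H^{-1}\bigr).
\]
Because $I+H^{-1}$ is a polynomial in $H$, it commutes with $H$, and this is what makes the subsequent computations collapse.

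For the first inequality, I would expand
\[
V^{*}V=\tfrac{1}{4}\bigl(I+H^{-1}\bigr)U^{*}U\bigl(I+H^{-1}\bigr)=\tfrac{1}{4}\bigl(I+H^{-1}\bigr)^{2}H.
\]
Using commutativity of functions of $H$, this simplifies to $\tfrac{1}{4}(H+2I+H^{-1})$, and subtracting $I$ produces the key identity
\[
V^{*}V-I=\tfrac{1}{4}H^{-1}(H-I)^{2}.
\]
Applying submultiplicativity together with $\|H^{-1}\|\le 4$ immediately gives $\|V^{*}V-I\|\le\|H-I\|^{2}=\|U^{*}U-I\|^{2}$.

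For the second inequality, the same factorization yields $U-V=\tfrac{1}{2}U(I-H^{-1})$. Rather than bound this directly, I would square it using $U^{*}U=H$:
\[
\|U(I-H^{-1})\|^{2}=\|(I-H^{-1})H(I-H^{-1})\|=\|H^{-1}(H-I)^{2}\|\le 4\|H-I\|^{2},
\]
again by commutativity of polynomials in $H$ and the bound on $\|H^{-1}\|$. Taking square roots and dividing by $2$ gives $\|U-V\|\le\|U^{*}U-I\|$.

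The only real obstacle is spotting the trick $(U^{-1})^{*}=UH^{-1}$; everything else is routine once that substitution places the problem inside the commutative functional calculus of $H$. The hypothesis $\|U^{*}U-I\|\le 3/4$ is used precisely to upgrade $\|H^{-1}\|\le 1/(1-\|H-I\|)$ into the clean constant $4$ that cancels the factor of $1/4$ in each key identity, producing the unadorned bounds stated in the lemma.
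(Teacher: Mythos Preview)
Your proof is correct. Both arguments are functional-calculus reductions, but the bookkeeping differs in a way worth noting. The paper takes the polar decomposition $U=D\Omega$ with $\Omega$ unitary and $D\ge 0$, so that all three quantities $\|U^{*}U-I\|$, $\|V^{*}V-I\|$, $\|U-V\|$ become suprema of explicit scalar functions over $\sigma(D)\subset[1/2,\sqrt{7}/2]$, and one then checks the two scalar inequalities pointwise. You instead work with $H=U^{*}U$ and the identity $(U^{-1})^{*}=UH^{-1}$, which yields the exact matrix identities $V^{*}V-I=\tfrac14 H^{-1}(H-I)^{2}$ and $(U-V)^{*}(U-V)=\tfrac14 H^{-1}(H-I)^{2}$; the bounds then follow from $\|H^{-1}\|\le 4$ and submultiplicativity, without ever passing to eigenvalues or invoking the polar factor $\Omega$. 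The two routes are equivalent at the level of the underlying scalar inequality $1/(4\lambda^{2})\le 1$, but your packaging is slightly more self-contained (no polar decomposition needed) and makes transparent that the two error quantities are governed by the \emph{same} matrix $\tfrac14 H^{-1}(H-I)^{2}$, which the paper's eigenvalue computations leave implicit.
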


\begin{proof}
Let $U=D\Omega$ be the alternate-side polar decomposition of $U$,
so $\Omega$ is unitary and $D$ is positive semi-definite. Then
\[
\left\Vert U^{*}U-I\right\Vert 
=
\left\Vert \Omega^{*}D^{2}\Omega-I\right\Vert 
=
\left\Vert D^{2}-I\right\Vert =\sup_{\lambda\in\sigma(D)}\left|\lambda^{2}-1\right|
\]
and 
\[
\left\Vert V^{*}V-I\right\Vert 
=
\left\Vert \left|\frac{1}{2}\left(D+D^{-1}\right)\right|^{2}-I\right\Vert 
=
\sup_{\lambda\in\sigma(D)}\left|\tfrac{1}{4}\left(\lambda+\lambda^{-1}\right)^{2}-1\right|
\]
 and
\[
\left\Vert U-V\right\Vert 
=
\left\Vert D-\frac{1}{2}\left(D+D^{-1}\right)\right\Vert 
=
\sup_{\lambda\in\sigma(D)}\left|\lambda-\tfrac{1}{2}\left(\lambda+\lambda^{-1}\right)\right|.
\]
This lemma reduces to routine algebra, showing that 
\[
\frac{1}{2}\leq\lambda\leq\frac{\sqrt{7}}{2}
\]
 implies
\[
\left|\tfrac{1}{4}\left(\lambda+\lambda^{-1}\right)^{2}-1\right|\leq\left|\lambda^{2}-1\right|^{2}
\]
 and
\[
\left|\lambda-\tfrac{1}{2}\left(\lambda+\lambda^{-1}\right)\right|
\leq
\left|\lambda^{2}-1\right|.
\]
\end{proof}

\begin{table}
\vspace{0.5cm}
\begin{tabular}{|c|>{\centering}p{1in}|>{\centering}p{1in}|c|c|c|c|}
\hline 
 & deviation from unitary & \multicolumn{5}{c|}{Time }\tabularnewline
\hline 
$n$ & $\left\Vert U^{*}U-I\right\Vert _{\strut}^{\strut}$ & Algorithm 1 & Algorithm 2 & Algorithm 3 & Algorithm 4 & Algorithm 5\tabularnewline
\hline 
\hline 
8 & 4.11082e-15 & 0.00013s & 0.00190s & 0.00009s & 0.00014s & 0.00016s\tabularnewline
\hline 
16 & 5.02961e-15 & 0.00034s & 0.00618s & 0.00024s & 0.00036s & 0.00037s\tabularnewline
\hline 
32 & 6.33082e-15 & 0.00139s & 0.02104s & 0.00104s & 0.00146s & 0.00145s\tabularnewline
\hline 
64 & 1.10432e-14 & 0.00877s & 0.05853s & 0.00719s & 0.00926s & 0.00886s\tabularnewline
\hline 
128 & 1.34734e-14 & 0.06111s & 0.13750s & 0.05442s & 0.06843s & 0.06218s\tabularnewline
\hline 
256 & 3.19324e-14 & 0.33397s & 0.73867s & 0.29425s & 0.35710s & 0.32437s\tabularnewline
\hline 
\end{tabular}\caption{Timing data for the algorithms and data set from
Table~\ref{tab:smallNoiseErr}. Times were computed in a non-parallel environment.
\label{tab:time}
}
\end{table}

\begin{thm}
Suppose $U$ is in $\mathbf{M}_{n}(\mathbb{C})$ with 
$\left\Vert U^{*}U-I\right\Vert \leq\frac{3}{4}$
and let 
\[
V=\frac{1}{2}\left(U+\left(U^{-1}\right)^{*}\right).
\]
If there is a unitary $Q$ and an upper-triangular matrix $T$ so
that $V=QTQ^{*}$, and if we define $D$ to be the diagonal unitary
matrix with 
\[
D_{jj}=\frac{1}{\left|T_{jj}\right|}T_{jj}
\]
then
\[
\left\Vert U-QDQ^{*}\right\Vert 
\leq
\left(\sqrt{2(n-1)}+2\right)\left\Vert U^{*}U-I\right\Vert .
\]
\end{thm}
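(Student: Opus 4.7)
The plan is to combine the preceding lemma (one Newton step quadratically improves the deviation from unitary while moving $U$ by at most that deviation) with Theorem~\ref{thm:SchurNearNormal} applied to $V$ rather than to $U$ itself. The triangle inequality
\[
\left\Vert U - QDQ^{*}\right\Vert
\leq
\left\Vert U - V\right\Vert + \left\Vert V - QDQ^{*}\right\Vert
\]
splits the task into two pieces, and each piece is already bounded by an earlier result.

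For the first term, the previous lemma gives directly $\|U - V\| \leq \|U^*U - I\|$, which uses the hypothesis $\|U^*U - I\| \leq \tfrac{3}{4}$ so that the spectrum of the positive factor in the polar decomposition lies in the range where the Newton step is contractive.

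For the second term, I would first observe that the hypothesis $\|U^*U - I\| \leq \tfrac{3}{4}$ forces $\|V^*V - I\| \leq \|U^*U - I\|^2 \leq \tfrac{9}{16} < 1$ by the same lemma, so Theorem~\ref{thm:SchurNearNormal} applies to $V$ with its Schur factorization $V = QTQ^*$. That theorem then yields
\[
\left\Vert V - QDQ^{*}\right\Vert
\leq
\left(\sqrt{2(n-1)}+1\right)\left\Vert V^{*}V-I\right\Vert^{\tfrac{1}{2}}
\leq
\left(\sqrt{2(n-1)}+1\right)\left\Vert U^{*}U-I\right\Vert,
\]
where the second inequality is exactly the quadratic improvement just noted. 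This is the crucial step: the square root in Theorem~\ref{thm:SchurNearNormal} is absorbed by Newton's quadratic convergence, upgrading a $\tfrac{1}{2}$-power bound in terms of $V$ into a linear bound in terms of $U$.

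Adding the two estimates gives
\[
\left\Vert U - QDQ^{*}\right\Vert
\leq
\left\Vert U^{*}U-I\right\Vert + \left(\sqrt{2(n-1)}+1\right)\left\Vert U^{*}U-I\right\Vert
= \left(\sqrt{2(n-1)}+2\right)\left\Vert U^{*}U-I\right\Vert,
\]
which is the claimed bound. There is no real obstacle here beyond bookkeeping; the whole argument is a direct assembly of the previous lemma and the Schur theorem, and the only subtlety is verifying that the hypothesis on $\|U^*U - I\|$ is strong enough both to invoke the Newton lemma and to guarantee $\|V^*V - I\| < 1$ so that Theorem~\ref{thm:SchurNearNormal} applies to $V$.
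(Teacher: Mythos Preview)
Your proof is correct and follows essentially the same route as the paper: apply Theorem~\ref{thm:SchurNearNormal} to $V$, use the Newton-step lemma to bound both $\|V^{*}V-I\|^{1/2}$ and $\|U-V\|$ by $\|U^{*}U-I\|$, and combine via the triangle inequality. Your explicit verification that $\|V^{*}V-I\|<1$ (so that the diagonal $D$ is well defined and the earlier lemmas apply to $V$) is a detail the paper leaves implicit.
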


\begin{proof}
We have 
\[
\left\Vert V-QDQ^{*}\right\Vert 
\leq
\left(\sqrt{2(n-1)}+1\right)\left\Vert V^{*}V-I\right\Vert ^{\frac{1}{2}}
\leq
\left(\sqrt{2(n-1)}+1\right)\left\Vert U^{*}U-I\right\Vert 
\]
and
\[
\left\Vert U-V\right\Vert \leq\left\Vert U^{*}U-I\right\Vert .
\]
\end{proof}

\begin{table}
\begin{tabular}{|c|>{\centering}p{1in}|>{\centering}p{1in}|c|c|c|c|}
\hline 
 & deviation from unitary & \multicolumn{5}{c|}{Backwards Error $\left\Vert e^{-iH}-U\right\Vert _{\strut}^{\strut}$}\tabularnewline
\hline 
$n$ & $\left\Vert U^{*}U-I\right\Vert _{\strut}^{\strut}$ & Algorithm 1 & Algorithm 2 & Algorithm 3 & Algorithm 4 & Algorithm 5\tabularnewline
\hline 
\hline 
8 & 1.19608e-05 & 0.33294 & 0.33294 & 9.42901e-06 & 5.98042e-06 & 5.98042e-06\tabularnewline
\hline 
16 & 1.27157e-05 & 0.18364 & 0.18364 & 1.01912e-05 & 6.35784e-06 & 6.35784e-06\tabularnewline
\hline 
32 & 1.25555e-05 & 0.24113 & 0.24113 & 1.01824e-05 & 6.27775e-06 & 6.27775e-06\tabularnewline
\hline 
64 & 1.21902e-05 & 0.21304 & 0.21304 & 9.96141e-06 & 6.09511e-06 & 6.09511e-06\tabularnewline
\hline 
128 & 1.18934e-05 & 0.24809 & 0.24809 & 9.74603e-06 & 5.94671e-06 & 5.94671e-06\tabularnewline
\hline 
256 & 1.15296e-05 & 0.25532 & 0.25532 & 9.45829e-06 & 5.76481e-06 & 5.76481e-06\tabularnewline
\hline 
\end{tabular}

\caption{As in Table~\ref{tab:smallNoiseErr}, but
with the noise variable set to $10^{-5}n^{-0.56}$. 
\label{tab:medNoiseErr}
}
\end{table}

It is worth keeping in mind that the spectral decomposition of the
polar part of $U$ leads to the theoretical best unitary diagonalization,
with $D$ diagonal, $Q$ unitary and
\[
\left\Vert U-Q^{*}DQ\right\Vert 
=
\max\left\{ |\lambda-1|\ \left|\strut\lambda\in\sigma\left(\left(U^{*}U\right)^{\frac{1}{2}}\right)\right.\right\} 
\]
and the best general lower bound is 
\[
\left\Vert U-Q^{*}DQ\right\Vert \geq\frac{1}{2}\left\Vert U^{*}U-I\right\Vert .
\]

\begin{lem}
Suppose $U$ is in $\mathbf{M}_{n}(\mathbb{C})$ and 
$\left\Vert U^{*}U-I\right\Vert \leq\frac{3}{4}$.
If 
\[
V_{1}=\frac{1}{2}\left(U+\left(U^{-1}\right)^{*}\right)
\]
and
\[
V=\frac{1}{2}\left(V_{1}+\left(V_{1}^{-1}\right)^{*}\right)
\]
then
\[
\left\Vert V^{*}V-I\right\Vert \leq\tfrac{4}{25}\left\Vert U^{*}U-I\right\Vert ^{4}
\]
 and
\[
\left\Vert U-V\right\Vert \leq\tfrac{7}{10}\left\Vert U^{*}U-I\right\Vert .
\]
\end{lem}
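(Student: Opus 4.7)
The plan is to follow the strategy of the preceding lemma: use the alternate-side polar decomposition $U = D\Omega$ with $\Omega$ unitary and $D$ positive definite (positivity since $\|U^*U - I\| \leq 3/4 < 1$). Then $V_1 = \tfrac{1}{2}(D + D^{-1})\Omega = D_1\Omega$ and $V = \tfrac{1}{2}(D_1 + D_1^{-1})\Omega = D_2\Omega$, so both $\|V^*V - I\| = \|D_2^2 - I\|$ and $\|U - V\| = \|D - D_2\|$ reduce to scalar suprema over $\lambda \in \sigma(D)$. Setting $\mu = \lambda^2$, the hypothesis gives $\mu \in [1-\epsilon, 1+\epsilon]$ with $\epsilon = \|U^*U - I\| \leq 3/4$.

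For the first inequality I would unwind the two nested Newton steps directly, obtaining the closed form
\[
\lambda_2^2 - 1 = \frac{(\mu-1)^4}{16\,\mu\,(\mu+1)^2}.
\]
The logarithmic derivative of the right-hand side has no positive zero away from $\mu = 1$, so the function is monotone on each of $(0,1)$ and $(1,\infty)$. Its maximum on $[1-\epsilon, 1+\epsilon]$ is therefore at an endpoint; comparing values (using $(1-\epsilon)(2-\epsilon)^2 < (1+\epsilon)(2+\epsilon)^2$) shows the max is at $\mu = 1-\epsilon$, of value $\epsilon^4 / [16(1-\epsilon)(2-\epsilon)^2]$. The stated constant $4/25$ then follows from the elementary fact that $(1-\epsilon)(2-\epsilon)^2$ is decreasing on $[0, 3/4]$ (its derivative is $-(2-\epsilon)(4-3\epsilon)$, which is negative there) and equals $25/64$ at $\epsilon = 3/4$.

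For the second inequality, the Newton iteration decreases monotonically from above to $1$, so $\lambda_1, \lambda_2 \geq 1$ always. For $\lambda \geq 1$, then, $|\lambda - \lambda_2| \leq \lambda - 1 \leq \sqrt{1+\epsilon} - 1 \leq \epsilon/2$, comfortably below $7\epsilon/10$. For $\lambda \in [\sqrt{1-\epsilon}, 1)$, $|\lambda - \lambda_2| = \lambda_2 - \lambda$ has derivative $\lambda_2'(\lambda) - 1 < 0$ in $\lambda$, so the supremum is at $\lambda = \sqrt{1-\epsilon}$. A direct computation of $\lambda_1 = (2-\epsilon)/(2\sqrt{1-\epsilon})$ and then $\lambda_2$ yields
\[
\lambda_2 - \lambda = \frac{\epsilon(4 - 3\epsilon)}{4\sqrt{1-\epsilon}\,(2-\epsilon)}.
\]
The desired bound $\leq 7\epsilon/10$ is $10(4-3\epsilon) \leq 28\sqrt{1-\epsilon}\,(2-\epsilon)$. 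Squaring (both sides positive on $[0, 3/4]$) yields the polynomial inequality
\[
196(1-\epsilon)(2-\epsilon)^2 - 25(4-3\epsilon)^2 \geq 0,
\]
whose left side factors as $(\tfrac{3}{4} - \epsilon)(196\epsilon^2 - 608\epsilon + 512)$; the quadratic factor has negative discriminant and is positive at $\epsilon = 0$, hence positive throughout, so the product is non-negative on $[0, 3/4]$.

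The main obstacle is the second inequality. The naive triangle-inequality split $|\lambda - \lambda_2| \leq |\lambda - \lambda_1| + |\lambda_1 - \lambda_2|$ is too weak: at $\epsilon = 3/4$ it produces roughly $33/32$, well above the desired $7\epsilon/10 = 21/40$. One must therefore carry out the composed Newton step in closed form, which introduces the irrational $\sqrt{1-\epsilon}$ and forces the squared-polynomial argument above.
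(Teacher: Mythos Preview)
Your argument is correct and follows the same strategy as the paper: reduce via the polar decomposition $U=D\Omega$ to scalar inequalities on the spectrum of $D$, then verify those inequalities by explicit calculus. The paper writes the two-step Newton map as the single rational function $\lambda_2=\dfrac{\lambda^4+6\lambda^2+1}{4\lambda(\lambda^2+1)}$ and simply states that ``some algebra and calculus'' gives the pointwise bounds on $[\tfrac12,\tfrac{\sqrt7}{2}]$; your substitution $\mu=\lambda^2$, the closed form $\lambda_2^2-1=(\mu-1)^4/[16\mu(\mu+1)^2]$, the endpoint analysis, and the factorization $(\tfrac34-\epsilon)(196\epsilon^2-608\epsilon+512)$ are exactly the omitted details, and they check out.
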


\begin{proof}
Tracking the spectrum of positive parts as before, we are looking
at 
\[
\tfrac{1}{2}\frac{\left(\tfrac{1}{2}\frac{\lambda^{2}+1}{\lambda}\right)^{2}+1}{\left(\tfrac{1}{2}\frac{\lambda^{2}+1}{\lambda}\right)}
=
\frac{\lambda^{4}+6\lambda^{2}+1}{4\lambda\left(\lambda^{2}+1\right)}
\]
and all that is needed is some algebra and calculus to verify that
\[
\frac{1}{2}\leq\lambda\leq\frac{\sqrt{7}}{2}
\]
implies
\[
\left|\left(\frac{\lambda^{4}+6\lambda^{2}+1}{4\lambda\left(\lambda^{2}+1\right)}\right)^{2}-1\right|
\leq
\frac{4}{25}\left|\lambda^{2}-1\right|^{4}
\]
 and
\[
\left|\frac{\lambda^{4}+6\lambda^{2}+1}{4\lambda\left(\lambda^{2}+1\right)}-\lambda\right|
\leq
\frac{7}{10}\left|\lambda^{2}-1\right|.
\]
\end{proof}

\begin{table}
\begin{tabular}{|c|>{\centering}p{1in}|>{\centering}p{1in}|c|c|c|c|}
\hline 
 & deviation from unitary & \multicolumn{5}{c|}{Backwards Error $\left\Vert e^{-iH}-U\right\Vert _{\strut}^{\strut}$}\tabularnewline
\hline 
$n$ & $\left\Vert U^{*}U-I\right\Vert _{\strut}^{\strut}$ & Algorithm 1 & Algorithm 2 & Algorithm 3 & Algorithm 4 & Algorithm 5\tabularnewline
\hline 
\hline 
8 & 3.98986e-01 & 0.48072 & 0.48083 & 2.91776e-01 & 1.87829e-01 & 1.87829e-01\tabularnewline
\hline 
16 & 3.99794e-01 & 0.76608 & 0.76629 & 3.00304e-01 & 1.88836e-01 & 1.88836e-01\tabularnewline
\hline 
32 & 4.01659e-01 & 0.74454 & 0.74577 & 3.02891e-01 & 1.86026e-01 & 1.86026e-01\tabularnewline
\hline 
64 & 4.04556e-01 & 0.81147 & 0.81217 & 2.99941e-01 & 1.85677e-01 & 1.85677e-01\tabularnewline
\hline 
128 & 3.92340e-01 & 0.86934 & 0.87020 & 2.90461e-01 & 1.79970e-01 & 1.79970e-01\tabularnewline
\hline 
256 & 3.81114e-01 & 1.23535 & 1.23575 & 2.84104e-01 & 1.75207e-01 & 1.75207e-01\tabularnewline
\hline 
\end{tabular}

\caption{As in Table~\ref{tab:smallNoiseErr}, but with
the noise variable set to $0.3n^{-0.56}$. 
\label{tab:largeNoiseErr}
}
\end{table}

As before, we use this to get an estimate on the algorithm that uses
two iterations of Newton's method followed by a Schur decomposition. 

\begin{thm}
Suppose $U$ is in $\mathbf{M}_{n}(\mathbb{C})$ with 
$\left\Vert U^{*}U-I\right\Vert \leq\frac{3}{4}$
and $n\geq 3$.  Let
\[
V_{1}=\frac{1}{2}\left(U+\left(U^{-1}\right)^{*}\right)
\]
and
\[
V=\frac{1}{2}\left(V_{1}+\left(V_{1}^{-1}\right)^{*}\right)
\]
If there is a unitary $Q$ and an upper-triangular matrix $T$ so
that $V=QTQ^{*}$, and if we define $D$ to be the diagonal unitary
matrix with 
\[
D_{jj}=\frac{1}{\left|T_{jj}\right|}T_{jj}
\]
then
\[
\left\Vert U-QDQ^{*}\right\Vert 
\leq
\tfrac{7}{10}\sqrt{n}\left\Vert U^{*}U-I\right\Vert ^{2}+\tfrac{7}{10}\left\Vert U^{*}U-I\right\Vert .
\]
\end{thm}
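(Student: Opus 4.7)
The plan is to mimic the single-iteration theorem (the one preceding Table~2) but feed in the stronger estimates from the lemma that was just proved for two Newton iterations. Concretely, I would split the target quantity with the triangle inequality as
\[
\left\Vert U-QDQ^{*}\right\Vert
\leq
\left\Vert U-V\right\Vert + \left\Vert V-QDQ^{*}\right\Vert,
\]
and then control the two pieces separately. The first term is immediately bounded by $\tfrac{7}{10}\left\Vert U^{*}U-I\right\Vert$ by the second inequality of the preceding lemma.

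For the second term I would apply the ``Schur near-unitary'' lemma (the one giving $\left\Vert T-D\right\Vert\leq(\sqrt{2(n-1)}+1)\left\Vert T^{*}T-I\right\Vert^{1/2}$) to the matrix $V$, using the Schur factorization $V=QTQ^{*}$. Conjugation by $Q$ is isometric, so this yields
\[
\left\Vert V-QDQ^{*}\right\Vert
\leq
\left(\sqrt{2(n-1)}+1\right)\left\Vert V^{*}V-I\right\Vert^{\frac{1}{2}}.
\]
Now I invoke the first inequality of the preceding lemma, $\left\Vert V^{*}V-I\right\Vert\leq\tfrac{4}{25}\left\Vert U^{*}U-I\right\Vert^{4}$, to obtain
\[
\left\Vert V-QDQ^{*}\right\Vert
\leq
\tfrac{2}{5}\left(\sqrt{2(n-1)}+1\right)\left\Vert U^{*}U-I\right\Vert^{2}.
\]
Note that the hypothesis $\left\Vert U^{*}U-I\right\Vert\leq\tfrac{3}{4}$ is what legitimizes the use of that lemma; no further smallness assumption is needed.

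Combining the two bounds, the statement will follow as soon as I verify the constant inequality
\[
\tfrac{2}{5}\left(\sqrt{2(n-1)}+1\right)\leq \tfrac{7}{10}\sqrt{n}
\qquad(n\geq 3),
\]
or equivalently $\sqrt{2(n-1)}+1\leq\tfrac{7}{4}\sqrt{n}$. The main (and only) obstacle is this elementary inequality: I would check it at $n=3$ (where the two sides are about $3$ and $3.031$, giving the tightest case and explaining why the hypothesis $n\geq 3$ appears), and then for larger $n$ observe that the leading coefficient on the left behaves like $\sqrt{2}\approx 1.414<\tfrac{7}{4}=1.75$, so the gap only widens. Once this is confirmed, adding the bounds from the two triangle-inequality pieces gives exactly the claimed estimate.
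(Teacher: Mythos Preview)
Your proposal is correct and matches the paper's own proof essentially step for step: triangle inequality, then the Schur-near-unitary lemma applied to $V$, then the two-iteration Newton estimates $\left\Vert V^{*}V-I\right\Vert\leq\tfrac{4}{25}\left\Vert U^{*}U-I\right\Vert^{4}$ and $\left\Vert U-V\right\Vert\leq\tfrac{7}{10}\left\Vert U^{*}U-I\right\Vert$, and finally the numerical comparison $\tfrac{2}{5}(\sqrt{2(n-1)}+1)\leq\tfrac{7}{10}\sqrt{n}$. The paper in fact asserts that last constant inequality without justification, so your remark that $n=3$ is the tight case and that the ratio of leading coefficients ($\sqrt{2}<\tfrac{7}{4}$) takes care of larger $n$ is slightly more than what the paper provides.
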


\begin{proof}
We have
\[
\left\Vert V^{*}V-I\right\Vert \leq\frac{4}{25}\left\Vert U^{*}U-I\right\Vert ^{4}
\]
and
\begin{align*}
\left\Vert V-QDQ^{*}\right\Vert  
 & \leq\left(\sqrt{2(n-1)}+1\right)\left\Vert V^{*}V-I\right\Vert ^{\frac{1}{2}}\\
 & \leq\left(\sqrt{2(n-1)}+1\right)\left(\frac{4}{25}\left\Vert U^{*}U-I\right\Vert ^{4}\right)^{\frac{1}{2}}\\
 & \leq\frac{7}{10}\sqrt{n}\left(\left\Vert U^{*}U-I\right\Vert ^{4}\right)^{\frac{1}{2}}
\end{align*}
and
\[
\left\Vert U-V\right\Vert \leq\tfrac{7}{10}\left\Vert U^{*}U-I\right\Vert .
\]

\end{proof}

Rather extreme input data is need to highlight the advantage of
Algorithm~\ref{alg:NewtonSchur}.  We test algorithms 
\ref{alg:logm}-\ref{alg:NewtonSchur}
on unitaries, and approximate
unitaries, that have multiple eigenvalues very near $-1$. See
Tables \ref{tab:smallNoiseErr}-\ref{tab:largeNoiseErr}.

Not much can be said regarding the error in the
output $H$ of Algorithm~\ref{alg:NewtonSchur} vs.\  the 
``true logarithm.'' The trouble is that the set of ``exactly connect''
answers jumps around in a most discontinious fashion.
If $\alpha,\beta$ are in $(-\pi,\pi]$ with $\alpha\approx-\pi$
and $\beta\approx\pi$ then for any unitary $Q$, consider the unitary
\[
U=Q\left[\begin{array}{cc}
e^{i\alpha} & 0\\
0 & e^{i\beta}
\end{array}\right]Q^{*}.
\]
The only exactly correct output given $U$ on input is
\[
Q\left[\begin{array}{cc}
\alpha & 0\\
0 & \beta
\end{array}\right]Q^{*}
\]
and this greatly depends on $Q$.

\section{$J$-skew-symmetric unitaries \label{sec:Self-dual-unitaries}}

When dealing with Hamiltonians for systems with certain time-reversal
symmetry, we need an extra involution on matrices, the\emph{ dual}.
Working in $N$-by-$N$ blocks, the dual operation on  $n$-by-$n$ matrices
with $n=2N$ is defined as
\[
\left[\begin{array}{cc}
A & B\\
C & D
\end{array}\right]^{\sharp}=\left[\begin{array}{cc}
D^{\mathrm{T}} & -B^{\mathrm{T}}\\
-C^{\mathrm{T}} & A^{\mathrm{T}}
\end{array}\right].
\]
It is easy to check this obeys the same axiom as the transpose. In
particular it commutes with the adjoint. See \cite{LoringQuaternions},
for example. The dual operation is not unique, but is fixed once we
specify 
\[
J=\left[\begin{array}{cc}
0 & I\\
-I & 0
\end{array}\right].
\]
Then $X^{\sharp}=-JX^{\mathrm{T}}J$. A matrix $X$ is $J$-skew-symmetric
when $X^{\sharp}=X$ and so if and only if $\left(XJ\right)^{\mathrm{T}}=-XJ$.

\begin{table}
\begin{tabular}{|c|>{\centering}p{1in}|>{\centering}p{1in}|c|c|}
\hline 
 & deviation from unitary & \multicolumn{3}{c|}{Backwards Error $\left\Vert e^{-iH}-U\right\Vert _{\strut}^{\strut}$}\tabularnewline
\hline 
$n$ & $\left\Vert U^{*}U-I\right\Vert _{\strut}^{\strut}$ & Algorithm 1A & Algorithm 2A & Algorithm 6\tabularnewline
\hline 
\hline 
8 & 2.96904e-15 & 1.17188 & 1.12520 & 3.27683e-15\tabularnewline
\hline 
16 & 3.27269e-15 & 1.10402 & 1.11826 & 4.50363e-15\tabularnewline
\hline 
32 & 4.12604e-15 & 1.13947 & 1.15103 & 6.68904e-15\tabularnewline
\hline 
64 & 8.33263e-15 & 0.77344 & 0.77214 & 1.00208e-14\tabularnewline
\hline 
128 & 1.02179e-14 & 1.46559 & 1.46102 & 1.52540e-14\tabularnewline
\hline 
256 & 2.46375e-14 & 1.19984 & 1.19987 & 2.78177e-14\tabularnewline
\hline 
\end{tabular}

\caption{We compare algorithms that produce matrices $H$ with $H^{*}=H^{\sharp}=H$
exactly but with $e^{iH}\approx U$ with varying accuracy. These are
run on unitary, $J$-skew-symmetric matrices with at least four eigenvalues
near $-1$. For each $n$ the results
shown are averaged iver $30$ test matrices. The noise
variable was set to $10^{-15}n^{-0.56}$ in the code in the appendix.
\label{tab:selfDualsmall}
}

\end{table}

There is a \emph{$J$-skew-symmetric Schur decomposition} result for $J$-skew-symmetric
matrices, a.k.a.~the skew-Hamiltonian Schur decomposition result
from \cite{benner-skew}.

\begin{thm} \label{thm:self-dual-algorithm}
Let $n=2N$.
If $X^{\sharp}=X$ in $\mathbf{M}_{n}(\mathbb{C})$ then there is
a unitary $Q$ with $Q^{\sharp}=Q^{*}$ and a matrix $S$ with $X=QSQ^{*}$
and so that 
\[
S=\left[\begin{array}{cc}
T & B\\
0 & T^{\mathrm{T}}
\end{array}\right]
\]
where $T$ is upper triangular. Moreover there is an $O(n^{3})$ algorithm
to find $Q$ and $T$.
\end{thm}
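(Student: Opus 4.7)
The plan is induction on $N$. For the base case $N=1$, the defining relations $D = A^{\mathrm{T}}$, $B^{\mathrm{T}} = -B$, $C^{\mathrm{T}} = -C$ applied to a $2 \times 2$ matrix force $X = \lambda I$, so we may take $Q=I$ with $T = (\lambda)$ and $B = 0$.

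For the inductive step with $N \geq 2$, I first find a unit eigenvector $v$ of $X$, say $Xv = \lambda v$. The key observation is that the companion vector $u := -J\overline{v}$ is orthogonal to $v$, has unit norm, and satisfies $v^{\mathrm{T}} J u = 1$; each of these is a short calculation using $J^{\mathrm{T}} J = I$ and the skew-symmetry of $J$. I then extend $(v, u)$ to a full orthonormal basis $v_1 = v, v_2, \ldots, v_N, u_1 = u, u_2, \ldots, u_N$ by repeatedly choosing any unit vector $v_k$ in the orthogonal complement of the previously chosen vectors and setting $u_k := -J\overline{v_k}$; one checks that the conjugate-linear map $w \mapsto -J\overline{w}$ preserves each such orthogonal complement, so the construction proceeds all the way to $k = N$. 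Arranging the columns as $Q = [v_1 \,\cdots\, v_N \,|\, u_1 \,\cdots\, u_N]$ gives a unitary matrix whose block form is $\begin{pmatrix} A & -\overline{C} \\ C & \overline{A} \end{pmatrix}$, and a direct check using the given formula for $\sharp$ shows this is precisely the condition $Q^{\sharp} = Q^{*}$.

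Now consider $M := Q^{*}XQ$. Since $Q^{\sharp} = Q^{*}$, conjugation by $Q$ preserves the dual, so $M^{\sharp} = M$. Also, $Mv_1 = \lambda v_1$ implies the first column of $M$ is $\lambda e_1$. Writing $M = \begin{pmatrix} A_M & B_M \\ C_M & D_M \end{pmatrix}$ and using $D_M = A_M^{\mathrm{T}}$, $B_M^{\mathrm{T}} = -B_M$, $C_M^{\mathrm{T}} = -C_M$, the vanishing of the first column of $C_M$ forces the vanishing of its first row, and the first column of $A_M$ being $\lambda e_1$ forces the first row of $D_M$ to be $\lambda e_1^{\mathrm{T}}$. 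Deleting rows and columns $1$ and $N+1$ produces a $(2N-2) \times (2N-2)$ matrix satisfying the same dual symmetry for the smaller $J$. Applying the inductive hypothesis to this submatrix, and extending its symplectic unitary by identity rows and columns in the deleted positions, yields the desired $S = \begin{pmatrix} T & B \\ 0 & T^{\mathrm{T}} \end{pmatrix}$ globally with $T$ upper triangular.

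The main obstacle is the $O(n^{3})$ complexity claim rather than existence: a literal implementation of the induction would require computing a full eigenvector at every deflation step, which is too expensive. Benner's algorithm \cite{benner-skew} achieves the stated cost by first using a sequence of symplectic Householder and Givens reflectors to reduce $X$ to a dual-symmetric upper Hessenberg form, then running a structure-preserving QR-type iteration that maintains both the Hessenberg profile and the dual symmetry throughout. The delicate part is verifying that the structured reflectors compose to a single $Q$ with $Q^{\sharp} = Q^{*}$ and that the structured QR iteration converges in a comparable number of iterations to the unstructured version.
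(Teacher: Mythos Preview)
Your inductive existence argument is correct and more elementary than the paper's; the paper does not induct.  Instead it proceeds in two stages.  First, the Paige--Van~Loan reduction (a finite sequence of symplectic Givens rotations and partial Householder reflections) produces a unitary $Q_1$ with $Q_1^{\sharp}=Q_1^{*}$ and
\[
Q_1^{*}XQ_1=\begin{pmatrix}T_1&B_1\\0&T_1^{\mathrm T}\end{pmatrix},
\]
with \emph{no} triangularity yet imposed on $T_1$.  Second, an \emph{ordinary} complex Schur decomposition $T_1=WTW^{*}$ of the half-size block finishes the job, since $\left(\begin{smallmatrix}W&0\\0&\overline{W}\end{smallmatrix}\right)$ is itself a unitary with the required $\sharp$-symmetry.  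Your approach gives a cleaner self-contained existence proof; the paper's approach has the advantage that the proof \emph{is} the $O(n^{3})$ algorithm.

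On the complexity claim your description overshoots.  You propose reducing to a dual-symmetric Hessenberg form and then running a structure-preserving QR-type iteration on the full $2N\times 2N$ matrix, and you flag convergence of that structured iteration as the delicate part.  The paper's point is that no structured QR is needed at all: the Paige--Van~Loan step is a direct (non-iterative) reduction costing $O(n^{3})$, and once the lower-left block is exactly zero the only iterative work remaining is the standard unstructured Schur on the $N\times N$ block $T_1$.  The convergence concern you raise therefore dissolves, and the $O(n^{3})$ bound is inherited from ordinary Schur on a matrix of half the size.
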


\begin{proof}
This is the complex analog of \cite[\S 2.1]{benner-skew}, discussed
in detail in \cite[\S 9.1]{HastLorTheoryPractice}. One uses some
variation on the Paige / Van Loan algorithm \cite[\S 2.1]{benner-skew},
involving careful combinations of Givens rotations and partial Householder
reflections to create a unitary $Q_{1}$ with $Q_{1}^{\sharp}=Q_{1}^{*}$
and so that $X=Q_{1}S_{1}Q_{1}^{*}$ with 
\[
S_{1}=\left[\begin{array}{cc}
T_{1} & B_{1}\\
0 & T_{1}^{\mathrm{T}}
\end{array}\right].
\]
 An ordinary Schur decomposition $T_{1}=WTW^{*}$ now finishes the
job, as
\[
Q=Q_{1}\left[\begin{array}{cc}
W\\
 & \overline{W}
\end{array}\right]
\]
has the needed symmetry, while
\[
S=\left[\begin{array}{cc}
W\\
 & \overline{W}
\end{array}\right]^{*}S_{1}\left[\begin{array}{cc}
W\\
 & \overline{W}
\end{array}\right]
\]
has the correct block structure. 
\end{proof}

For comparision purposes, we say we are using Algorithm~\ref{alg:logm}A, etc,
if we add a final step to the algorithms above that replaces 
$H$ by $\tfrac{1}{2}H^\sharp.$

\begin{table}
\vspace{0.5cm}
\begin{tabular}{|c|>{\centering}p{1in}|>{\centering}p{1in}|c|c|}
\hline 
 & deviation from unitary & \multicolumn{3}{c|}{Time }\tabularnewline
\hline 
$n$ & $\left\Vert U^{*}U-I\right\Vert _{\strut}^{\strut}$ & Algorithm 1A & Algorithm 2A & Algorithm 6\tabularnewline
\hline 
\hline 
8 & 2.96904e-15 & 0.00019s & 0.00223s & 0.00250s\tabularnewline
\hline 
16 & 3.27269e-15 & 0.00034s & 0.00503s & 0.00616s\tabularnewline
\hline 
32 & 4.12604e-15 & 0.00124s & 0.01752s & 0.01492s\tabularnewline
\hline 
64 & 8.33263e-15 & 0.00773s & 0.05169s & 0.03858s\tabularnewline
\hline 
128 & 1.02179e-14 & 0.05879s & 0.16380s & 0.14007s\tabularnewline
\hline 
256 & 2.46375e-14 & 0.34309s & 0.52146s & 0.79723s\tabularnewline
\hline 
\end{tabular}\caption{Timing data for the algorithms and data set from
Table~\ref{tab:selfDualsmall}.
\label{tab:timingDual}
}
\end{table}

Notice that $S$ becomes upper-triangular if we just reverse the order
of half the basis elements.

\begin{thm}
\label{thm:SchurNearNormalStructured} 
Let $n=2N$.
Suppose $U$ is a $J$-skew-symmetric
matrix in $\mathbf{M}_{n}(\mathbb{C})$ and that unitary $Q$ and
matrix $T$ form a $J$-skew-symmetric Schur decomposition for $U$. If we define
$D$ to be the diagonal unitary matrix with 
\[
D_{jj}=\frac{1}{\left|T_{jj}\right|}T_{jj}
\]
then $D$ is $J$-skew-symmetric, diagonal and 
\[
\left\Vert U-QDQ^{*}\right\Vert
\leq
\left(\sqrt{2(n-1)}+1\right)\left\Vert U^{*}U-I\right\Vert ^{\frac{1}{2}}.
\]
\end{thm}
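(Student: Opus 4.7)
The plan is to reduce this to the earlier unstructured result, Theorem~\ref{thm:SchurNearNormal}. Write $T$ in the block form from Theorem~\ref{thm:self-dual-algorithm},
\[
T=\begin{bmatrix} T_1 & B \\ 0 & T_1^{\mathrm{T}} \end{bmatrix},
\]
with $T_1$ upper triangular of size $N\times N$. The diagonal of $T$ therefore consists of the diagonal of $T_1$, listed twice in the same order, so $D$ has the block form $\mathrm{diag}(D_1,D_1)$ where $(D_1)_{jj}=(T_1)_{jj}/|(T_1)_{jj}|$. Since $D_1$ is diagonal we have $D_1^{\mathrm{T}}=D_1$, and plugging this into the definition of $\sharp$ immediately gives $D^{\sharp}=D$. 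By construction $D$ is diagonal and unitary.

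Next I would permute the basis to turn $T$ into an ordinary upper-triangular matrix. Let $R$ be the $N\times N$ reversal (anti-identity) matrix and set $P=\mathrm{diag}(I,R)$, a real permutation. Conjugation gives
\[
\widetilde T \;=\; P^{\mathrm{T}} T P \;=\; \begin{bmatrix} T_1 & BR \\ 0 & R T_1^{\mathrm{T}} R \end{bmatrix},
\]
which is genuinely upper triangular, because $R T_1^{\mathrm{T}} R$ reverses both the rows and the columns of the lower-triangular $T_1^{\mathrm{T}}$. With $\widetilde Q = QP$ (still unitary), the identity $U=\widetilde Q\widetilde T\widetilde Q^{*}$ is an ordinary Schur decomposition of $U$.

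I would then apply Theorem~\ref{thm:SchurNearNormal} to this ordinary decomposition to produce the diagonal unitary $\widetilde D$ with $\widetilde D_{jj}=\widetilde T_{jj}/|\widetilde T_{jj}|$ and the norm bound
\[
\bigl\| U-\widetilde Q\widetilde D\widetilde Q^{*}\bigr\| \;\leq\; \bigl(\sqrt{2(n-1)}+1\bigr)\,\|U^{*}U-I\|^{1/2}.
\]
What remains is to show that $\widetilde Q\widetilde D\widetilde Q^{*}$ coincides with the $QDQ^{*}$ of the theorem. The first $N$ diagonal entries of $\widetilde T$ are those of $T_1$ and the last $N$ are the same entries in reversed order, so a one-line calculation gives $P\widetilde D P^{\mathrm{T}}=\mathrm{diag}(D_1,D_1)=D$. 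Hence $\widetilde Q\widetilde D\widetilde Q^{*}=QP\widetilde D P^{\mathrm{T}}Q^{*}=QDQ^{*}$, and the bound transfers verbatim.

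There is really no serious obstacle here: the entire content of the argument is recognizing that the special block shape of $T$ produced by Theorem~\ref{thm:self-dual-algorithm} simultaneously forces the repeated diagonal that yields $D^{\sharp}=D$ and admits a trivial basis reversal reducing the structured statement to the unstructured Theorem~\ref{thm:SchurNearNormal}.
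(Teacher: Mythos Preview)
Your argument is correct and follows exactly the approach the paper signals in the sentence preceding the theorem (``Notice that $S$ becomes upper-triangular if we just reverse the order of half the basis elements''): conjugate by $\mathrm{diag}(I,R)$ to convert the structured block form into an honest upper-triangular matrix, apply Theorem~\ref{thm:SchurNearNormal}, and undo the permutation. The paper in fact gives no explicit proof here, so you have supplied precisely the details that the paper leaves to the reader.
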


If we approximate the polar part of $U$ by Newton's method, the symmetry
$U^{\sharp}=U$ is preserved at each iteration, since
\[
\left(\frac{1}{2}\left(U+\left(U^{-1}\right)^{*}\right)\right)^{\sharp}
=
\frac{1}{2}U^{\sharp}+\left(\left(U^{-1}\right)^{*}\right)^{\sharp}
=
\frac{1}{2}U^{\sharp}+\left(\left(U^{-1}\right)^{\sharp}\right)^{*}
=\frac{1}{2}U^{\sharp}+\left(\left(U^{\sharp}\right)^{-1}\right)^{*}.
\]

\begin{thm}
\label{thm:self_dual_diag} 
Let $n=2N$.
Suppose $U^{\sharp}=U$ is in 
$\mathbf{M}_{n}(\mathbb{C})$
with $\left\Vert U^{*}U-I\right\Vert \leq\frac{3}{4}$ and let 
\[
V_{1}=\frac{1}{2}\left(U+\left(U^{-1}\right)^{*}\right)
\]
and 
\[
V=\frac{1}{2}\left(V_{1}+\left(V_{1}^{-1}\right)^{*}\right)
\]
If a unitary $Q$ and matrix $T$ form a $J$-skew-symmetric Schur decomposition
for $V$, and if we define $D$ the diagonal unitary matrix with 
\[
D_{jj}=\frac{1}{\left|T_{jj}\right|}T_{jj},
\]
 then $D$ is $J$-skew-symmetric and 
\[
\left\Vert U-QDQ^{*}\right\Vert 
\leq
\tfrac{7}{10}\sqrt{n}\left\Vert U^{*}U-I\right\Vert ^{2}+\tfrac{7}{10}\left\Vert U^{*}U-I\right\Vert .
\]
\end{thm}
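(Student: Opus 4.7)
The plan is to mirror the structure of the earlier theorem for almost-unitary matrices, replacing the bare Schur factorization step with the $J$-skew-symmetric Schur factorization and checking that the $J$-skew-symmetry is carried through every stage of the construction. Concretely, I would combine three ingredients already in hand: the quantitative lemma that controls $\|V^*V - I\|$ and $\|U - V\|$ in terms of $\|U^*U - I\|$, the observation (displayed in the paragraph preceding the theorem) that the Newton map preserves $X^\sharp = X$, and Theorem~\ref{thm:SchurNearNormalStructured}, which is the structured analog of the near-unitary diagonalization estimate.

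First I would note that the Newton calculation just above the theorem applies twice: since $U^\sharp = U$, we get $V_1^\sharp = V_1$ and then $V^\sharp = V$, so a $J$-skew-symmetric Schur factorization $V = QSQ^*$ with $S$ in the block form of Theorem~\ref{thm:self-dual-algorithm} exists. In that block form the diagonal of $S$ consists of the diagonal of $T$ followed by the diagonal of $T^{\mathrm{T}}$, hence has the pattern $(\alpha_1,\dots,\alpha_N,\alpha_1,\dots,\alpha_N)$. Normalizing entrywise yields $D = \operatorname{diag}(\beta_1,\dots,\beta_N,\beta_1,\dots,\beta_N)$ with $\beta_j = \alpha_j/|\alpha_j|$, and a short check with $J = \begin{bmatrix} 0 & I \\ -I & 0 \end{bmatrix}$ shows that any diagonal matrix with this block-repeating pattern satisfies $-JD^{\mathrm{T}}J = D$, i.e.\ is $J$-skew-symmetric.

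Next I would apply Theorem~\ref{thm:SchurNearNormalStructured} to $V$ to obtain
\[
\left\Vert V - QDQ^* \right\Vert
\leq \left(\sqrt{2(n-1)} + 1\right) \left\Vert V^*V - I \right\Vert^{\frac{1}{2}},
\]
and then feed in the lemma's bound $\|V^*V - I\| \leq \tfrac{4}{25}\|U^*U - I\|^4$ to get
\[
\left\Vert V - QDQ^* \right\Vert
\leq \tfrac{2}{5}\left(\sqrt{2(n-1)} + 1\right) \left\Vert U^*U - I \right\Vert^{2}.
\]
The triangle inequality with $\|U - V\| \leq \tfrac{7}{10}\|U^*U - I\|$ then gives a bound of the stated shape.

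The last step is the arithmetic reconciliation of the leading constant with the clean form $\tfrac{7}{10}\sqrt{n}$. What needs to be verified is that $\tfrac{2}{5}(\sqrt{2(n-1)} + 1) \leq \tfrac{7}{10}\sqrt{n}$, equivalently $\sqrt{2(n-1)} + 1 \leq \tfrac{7}{4}\sqrt{n}$. This is where the hypothesis $n \geq 3$ enters: at $n = 3$ the left side is $3$ and the right side is $\tfrac{7}{4}\sqrt{3} \approx 3.03$, and the ratio $(\sqrt{2(n-1)} + 1)/\sqrt{n}$ is monotone toward $\sqrt{2}$, staying below $\tfrac{7}{4}$. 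This inequality is the only potentially subtle piece; everything else is a bookkeeping assembly of results already established. After the triangle inequality the bound reads exactly $\tfrac{7}{10}\sqrt{n}\|U^*U - I\|^2 + \tfrac{7}{10}\|U^*U - I\|$, completing the proof.
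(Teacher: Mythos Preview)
Your argument is correct and matches what the paper leaves implicit: the theorem is stated without proof and is meant to be read as the structured analogue of the earlier two-Newton-step theorem, obtained by combining the lemma bounding $\|V^{*}V-I\|$ and $\|U-V\|$, the preservation of $X^{\sharp}=X$ under the Newton map, and Theorem~\ref{thm:SchurNearNormalStructured}.

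One small point of bookkeeping: you invoke ``the hypothesis $n\geq 3$,'' but this theorem does not carry that hypothesis; it only assumes $n=2N$, so $n$ is an even integer $\geq 2$. Fortunately the constant inequality $\sqrt{2(n-1)}+1\leq\tfrac{7}{4}\sqrt{n}$ already holds at $n=2$ (left side $\sqrt{2}+1\approx 2.414$, right side $\tfrac{7}{4}\sqrt{2}\approx 2.475$), so nothing is lost. You might simply rephrase that step as ``since $n=2N\geq 2$'' rather than importing the $n\geq 3$ assumption from the unstructured theorem.
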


\begin{table}
\begin{tabular}{|c|>{\centering}p{1in}|>{\centering}p{1in}|c|c|}
\hline 
 & deviation from unitary & \multicolumn{3}{c|}{Backwards Error $\left\Vert e^{-iH}-U\right\Vert _{\strut}^{\strut}$}\tabularnewline
\hline 
$n$ & $\left\Vert U^{*}U-I\right\Vert _{\strut}^{\strut}$ & Algorithm 1A & Algorithm 2A & Algorithm 6\tabularnewline
\hline 
\hline 
8 & 6.80865e-06 & 0.50219 & 0.50219 & 3.40432e-06\tabularnewline
\hline 
16 & 7.91995e-06 & 0.27931 & 0.27931 & 3.95997e-06\tabularnewline
\hline 
32 & 8.21901e-06 & 0.32397 & 0.32397 & 4.10950e-06\tabularnewline
\hline 
64 & 8.30081e-06 & 0.43814 & 0.43814 & 4.15041e-06\tabularnewline
\hline 
128 & 8.15713e-06 & 0.30607 & 0.30607 & 4.07856e-06\tabularnewline
\hline 
256 & 8.02983e-06 & 0.38732 & 0.38732 & 4.01491e-06\tabularnewline
\hline 
\end{tabular}

\caption{As in Table~\ref{tab:selfDualsmall}, but with
the noise variable set to $10^{-5}n^{-0.56}$.
\label{tab:selfDualmed}
}
\end{table}

Theorem~\ref{thm:self_dual_diag} gives theoretical justification for
that the following algorithm produces approximately correct output.
This is very similar to the algorithm used in \cite{LorHastHgTe}. 
Since this is built out of known algorithms, we don't have 
anything to say it is $\mathcal{O}(n^3)$.

\begin{alg} \ \label{alg:selfDual}
\begin{enumerate}
\item Set $V_1 = \tfrac{1}{2}\left(U+\left(U^{-1}\right)^{*}\right)$.
\item Set $V = \tfrac{1}{2}\left(V_1+\left(V_1^{-1}\right)^{*}\right)$.
\item Compute a $J$-skew-symmetric Schur factorization, as in Theorem~\ref{thm:self_dual_diag},
with $QTQ^{*} \approx V$.
\item Create a unitary diagonal matrix $D$ via $D_{jj}=T_{jj}/\left|T_{jj}\right|$.
\item Compute $Q \log(D) Q^*$.
\end{enumerate}
\end{alg}

\begin{table}
\begin{tabular}{|c|>{\centering}p{1in}|>{\centering}p{1in}|c|c|}
\hline 
 & deviation from unitary & \multicolumn{3}{c|}{Backwards Error $\left\Vert e^{-iH}-U\right\Vert _{\strut}^{\strut}$}\tabularnewline
\hline 
$n$ & $\left\Vert U^{*}U-I\right\Vert _{\strut}^{\strut}$ & Algorithm 1A & Algorithm 2A & Algorithm 6\tabularnewline
\hline 
\hline 
8 & 2.12731e-01 & 0.45404 & 0.45282 & 1.05752e-01\tabularnewline
\hline 
16 & 2.43013e-01 & 0.55252 & 0.55205 & 1.18737e-01\tabularnewline
\hline 
32 & 2.51907e-01 & 0.83382 & 0.83331 & 1.21165e-01\tabularnewline
\hline 
64 & 2.61957e-01 & 0.96758 & 0.96743 & 1.24065e-01\tabularnewline
\hline 
128 & 2.62333e-01 & 0.94305 & 0.94325 & 1.23593e-01\tabularnewline
\hline 
256 & 2.56967e-01 & 1.03853 & 1.03868 & 1.21172e-01\tabularnewline
\hline 
\end{tabular}

\caption{As in Table~\ref{tab:selfDualsmall}, but with the noise variable set to $0.3n^{-0.56}$.
\label{tab:selfDualBig}
}
\end{table}

For data on timing and accuracy, see Tables
\ref{tab:selfDualsmall}-\ref{tab:selfDualBig}.
These tables were produced with the MATLAB code listed in the appendix. That
code has almost no optimization. In particular, it does not take advantage
of the symmetries $Q^{\sharp}=Q$ and $D^{\sharp}=D$ that hold at
the top of the loop in the Paige - Van Loan algorithm.

\section{Logarithms in Physics}

In quantum mechanics, it is standard to exponential a skew-Hermitian
operator to get a unitary, as this is how one moves from the Hamiltonian
to the time evolution operator. When then is a periodic time-dependent
Hamiltonian $H_{t}$ of period $T$, the definition of quasi-energy
depends on the Floquet Hamiltonian $H_{F}$ defined via
\[
e{}^{-iH_{F}}=Te^{-i\int_{0}^{T}H_{t}\, dt}.
\]
It does not matter if the principal branch of logarithm is used to
define $H_{F}$ but it is important in numerical studies that $H_{F}$
be computed to be Hermitian. The study of Floquet
topological insulators \cite{lindner2011floquet}
is an important special case of a system with a periodic time-dependent
Hamiltonian.  Some Floquet
topological insulators
can only be explained by keeping track of a form of time-reversal
symmetry \cite[equation 21]{katan2013modulated}.
In that case one has a self-dual Floquet Hamiltonian.

The logarithms of unitary matrices arise in another way
in physics, in particular in the study of the more typical
topological insulators 
where there is a time-independents Hamiltonian.
For finite lattice models of non-interacting fermions,
the dimension and symmetry class determine if
distinct topological phases can occur.  Physically
these phases are ordinary insulators and topological
insulators \cite{fulga2011Wire, teo2010topological}.  

Joint working with Hastings \cite{HastLorTheoryPractice}
established that the ordinary insulating phases can be
characterized by the existence of localized vectors that
form a basis of low-energy space (localized Wannier functions)
where the basis preserves an appropriate symmetry.
We explained in Section 4 of \cite{HastLorTheoryPractice}
how to translate this question into a question about almost
normal, almost unitary, or almost commuting matrices.

The AII symmetry class is one where there is a certain 
flavor of time-reversal invariance.  What this means
mathematically is that one starts with $J$-skew-symmetric and
Hermitian matrices for the Hamiltonian and the position
observables.  There is a promising method for computing
the spin Chern numbers that involves self-dual logarithms.
This was introduced in \cite{LorHastHgTe}.  The formula
in $K$-theory used there is now validated by the
theorems in \cite{LoringQuantKth} and the results in
\cite{LoringQuantKth}.

\section{Acknowledgments}

This work was partially
supported by a grant from the Simons Foundation (208723 to Loring).

\section{Appendix}

\subsection{Code for General Unitaries}

\tt
\verbatiminput{testLogs.m}

\subsection{Code for $J$-skew symmetric Unitaries}

\tt
\verbatiminput{testLogsDual.m}

\smallskip \rule[0.5ex]{1\columnwidth}{1pt}   

\verbatiminput{PVL.m}

\smallskip \rule[0.5ex]{1\columnwidth}{1pt}                    

\verbatiminput{dual.m}

\rm 


\begin{thebibliography}{10}

\bibitem{anderson1999lapack}
{\sc E.~Anderson, Z.~Bai, and C.~Bischof}, {\em LAPACK Users' guide}, vol.~9,
  Society for Industrial Mathematics, 1999.

\bibitem{benner-skew}
{\sc P.~Benner, D.~Kressner, and V.~Mehrmann}, {\em {Skew-Hamiltonian and
  Hamiltonian eigenvalue problems: Theory, algorithms and applications}}, in
  Proceedings of the Conference on Applied Mathematics and Scientific
  Computing, Springer, 2005, pp.~3--39.

\bibitem{bunseStructuredWigenvalue}
{\sc A.~Bunse-Gerstner, R.~Byers, and V.~Mehrmann}, {\em {A chart of numerical
  methods for structured eigenvalue problems}}, SIAM Journal on Matrix Analysis
  and Applications, 13 (1992), p.~419.

\bibitem{cheng2000return}
{\sc S.~Cheng, N.~Higham, C.~Kenney, and A.~Laub}, {\em Return to the middle
  ages: A half-angle iteration for the logarithm of a unitary matrix}, in
  Proceedings of the Fourteenth International Symposium of Mathematical Theory
  of Networks and Systems, Perpignan, France. CD ROM, 2000.
  
\bibitem{davidson2010essentially}
{\sc K.~Davidson}, {\em Essentially normal operators}, A Glimpse at Hilbert
  Space Operators,  (2010), pp.~209--222.

\bibitem{DavHighamMatrixFunctions2003}
{\sc P.~Davies and N.~Higham}, {\em A {S}chur-{P}arlett algorithm for computing
  matrix functions}.

\bibitem{fulga2011scattering}
{\sc I.~Fulga, F.~Hassler, and A.~Akhmerov}, {\em Scattering theory of
  topological insulators and superconductors}, Arxiv preprint arXiv:1106.6351,
  (2011).

\bibitem{fulga2011Wire}
{\sc I.~Fulga, F.~Hassler, A.~Akhmerov, and C.~Beenakker}, {\em Scattering
  formula for the topological quantum number of a disordered multimode wire},
  Physical Review B, 83 (2011), p.~155429.

\bibitem{hastings2001eigenvalue}
{\sc M.~Hastings}, {\em {Eigenvalue Distribution In The Self-Dual Non-Hermitian
  Ensemble}}, Journal of Statistical Physics, 103 (2001), pp.~903--913.

\bibitem{HastLorTheoryPractice}
{\sc M.~B. Hastings and T.~A. Loring}, {\em Topological insulators and {$C\sp
  *$}-algebras: Theory and numerical practice}, Ann. Physics, 326 (2011),
  pp.~1699--1759.

\bibitem{HenriciNonNormal1962}
{\sc P.~Henrici}, {\em Bounds for iterates, inverses, spectral variation and
  fields of values of non-normal matrices}, 4 (1962), pp.~24--40.

\bibitem{higham1994parallel}
{\sc N.~Higham and P.~Papadimitriou}, {\em {A parallel algorithm for computing
  the polar decomposition}}, Parallel Computing, 20 (1994), pp.~1161--1173.

\bibitem{katan2013modulated}
{\sc Y.~Katan and D.~Podolsky}, {\em Modulated floquet topological insulators},
  Physical Review Letters, 110 (2013), p.~016802.

  
\bibitem{lindner2011floquet}
{\sc N.~Lindner, G.~Refael, and V.~Galitski}, {\em Floquet topological
  insulator in semiconductor quantum wells}, Nature Physics, 7 (2011),
  pp.~490--495.

  


\bibitem{LoringQuantKth}
{\sc T.~A. Loring}, {\em Quantitative $k$-theory and spin chern numbers}.
\newblock arxiv:1302.0349.

\bibitem{LoringQuaternions}
\leavevmode\vrule height 2pt depth -1.6pt width 23pt, {\em Factorization of
  matrices of quaternions}, Exposition. Math., 30 (2012), pp.~250--267.

\bibitem{LorHastHgTe}
{\sc T.~A. Loring and M.~B. Hastings}, {\em Disordered topological insulators
  via {$C\sp *$}-algebras}, Europhys. Lett. EPL, 92 (2010), p.~67004.
  
\bibitem{moler1978nineteen}
{\sc C.~Moler and C.~Van~Loan}, {\em Nineteen dubious ways to compute the
  exponential of a matrix}, SIAM review, 20 (1978), pp.~801--836.

\bibitem{teo2010topological}
{\sc J.~Teo and C.~Kane}, {\em Topological defects and gapless modes in
  insulators and superconductors}, Physical Review B, 82 (2010), p.~115120.

\end{thebibliography}
\end{document}